\documentclass[12pt]{article}
\usepackage{amssymb}
\usepackage{graphicx}
\usepackage{amsmath}

\newtheorem{theorem}{Theorem}[section]
\newtheorem{proposition}{Proposition}[section]
\newtheorem{lemma}{Lemma}[section]
\newtheorem{remark}{Remark}[section]

\def \R{\mbox{I\hspace{-.15em}R}}
\def \P{\mbox{I\hspace{-.15em}P}}
\def \E{\mbox{I\hspace{-.15em}E}}

\RequirePackage[latin1]{inputenc} \RequirePackage[T1]{fontenc}
\newenvironment{proof}[1][Proof]{\textbf{#1.} }{\ \rule{0.5em}{0.5em}}

\parindent 0.75cm
\oddsidemargin 0pt\evensidemargin 0pt \topmargin -0.5cm \textwidth
17cm \textheight 21.5cm
\parskip 0.3cm

\begin{document}

\author{Auguste Aman\thanks{E-mail address: augusteaman5@yahoo.fr}\\
U.F.R.M.I, Université de Cocody,\\
22 BP 582 Abidjan 22, Côte d'Ivoire}
\date{}
\title{Homeomorphism of solutions to backward doubly SDEs and applications\thanks{This works is partially support by Fellowship grant of AMMSI}}
\maketitle
\begin{abstract}
In this paper we study the homeomorphic properties of the solutions to one dimensional backward doubly
stochastic differential equations under suitable assumptions, where the terminal values depend on a real
parameter. Then, we apply them to the solutions for a class of second order quasilinear parabolic stochastic partial
differential equations.
\end{abstract}
\textbf{MSC Subject Classification:} 65C05; 60H07; 62G08\\
\textbf{Key Words}: Backward stochastic doubly differential equation; comparison theorem; homeomorphism.

\section{Introduction and Main results}
\setcounter{theorem}{0} \setcounter{equation}{0}
For fix a positive real number
$T>0$, let $\{W_{t}, 0\leq t\leq T\}$ and $\{B_{t},\ 0\leq
t\leq T\}$ are two mutually independent standard Brownian motions
with values in $\mbox{I\hspace{-.15em}R}$, defined respectively on the two
probability spaces $(\Omega_{1},\mathcal{F}_{1},{\P}_{1})$ and
$(\Omega_{2},\mathcal{F}_{2},{\P}_{2})$. Next we consider $(\Omega,\mathcal{F},\P)$ the probability space defined by
\begin{eqnarray*}
\Omega=\Omega_{1}\times\Omega_{2},\,\,\mathcal{F}=\mathcal{F}_{1}\otimes\mathcal{F}_{2}\,\,
\mbox{ and}\,\, \P=\P_{1}\otimes\P_{2},
\end{eqnarray*}
and $\mathcal{N}$ denote the class of $\P$-null sets of $\mathcal{F}$. For each $t\in[0, T]$,
we define
\begin{eqnarray*}
\mathcal{F}_{t}=\mathcal{F}_{t}^{W}\otimes\mathcal{F}^{B}_{t,T}
\end{eqnarray*}
where for any process $\{\eta_t\},\, \mathcal{F}^{\eta}_{s,t}=\sigma\{\eta_{r}-\eta_{s},s\leq r\leq t\}\vee
\mathcal{N},\; \mathcal{F}^{\eta}_{t}=\mathcal{F}^{\eta}_{0,t}$.

Let us remark that the collection ${\bf F}= \{\mathcal{F}_{t},\ t\in
[0,T]\}$ is neither increasing nor decreasing and it does not
constitute a filtration. Further, we assume that, random variables,
$\zeta(\omega_{2}),\; \omega_{2}\in \Omega_{2}$ are considered as
random variables on $\Omega $ via the following identification:
\begin{eqnarray*}
\zeta(\omega_{1},\omega_{2})=\zeta(\omega_{2}).
\end{eqnarray*}

Now, let consider the following one dimensional backward doubly stochastic differential equation (BDSDE):
\begin{eqnarray}
Y_{t}^{\xi}=\xi+\int_{t}^{T}f(s,Y_{s}^{\xi},Z_{s}^{\xi})\,
ds+\int_{t}^{T}g(s,Y_{s}^{\xi},Z_s^{\xi})d\overleftarrow{B_{s}} -\int_{t}^{T}Z_{s}^{\xi}dW_{s},\;\; t\in[0,T],\label{BDSDE}
\end{eqnarray}
where the terminal condition $\xi\in L^2(\Omega,\mathcal{F}_{T}, \P)$, $f(s,\omega, y, z): [0, T ] \times\Omega\times\R\times \R\rightarrow\R$
and $\ g(s,\omega_2, y, z) : [0, T ] \times\Omega_2\times\R\times \R\rightarrow\R$ satisfy that

\begin{description}
\item $({\bf H}_{f}^{1})$\ for all $y, z\in\R$, the process $t\mapsto (f(t, y, z), g(t, y, z))$ is $\mathcal{F}_t$-adapted,\\
$\displaystyle{\int^{T}_{0}|f (s, 0, 0)|ds\in L^2(\Omega,\mathcal{F}_{T}, \P)}$,\\
for some $C_{f}> 0$\ and all $(s,\omega_1,\omega_2)\in[0, T ]\times\Omega_1\times\Omega_2, y, y', z, z'\in\R,\\\\
\begin{array}{l}
|f(s,\omega,y,z)-f(s',\omega,y',z')|\leq C_{f}\left(|y-y'|+|z-z'|\right).
\end{array}
$
\item $({\bf H}_{g}^{1})$\ for all $y, z\in\R$, the process $t\mapsto g(t, y, z)$ is $\mathcal{F}_t$-adapted,\\
$\displaystyle{\int^{T}_{0}|g(s, 0, 0)|^{2}ds\in L^1(\Omega_2,\mathcal{F}_{T}; \P)}$,\\
for some $C_g> 0,\ 0<\alpha_{g}<1$ and all $(s,\omega_2)\in[0, T ]\times\Omega_2, y, y', z, z'\in\R$,\\\\
$
\begin{array}{l}
|g(s,\omega_2,y,z)-g(s,\omega_2,y',z')|^{2} \leq  C_{g}|y-y'|^2+\alpha_{g}|z-z'|^2.
\end{array}
$
\end{description}

Solving such an equation is to find a pair of $\mathcal{F}_t$-adapted processes $(Y_t,Z_t)$ such that Eq.$(\ref{BDSDE})$ holds. Applying the extension of representation theorem of martingales with
respect to $\mathcal{G}_t$, denoting by $\mathcal{G}_t=\mathcal{F}_{t}^{W}\vee\mathcal{F}^{B}_T$, the existence and uniqueness for Eq.$(\ref{BDSDE})$ were first established by Pardoux-Peng \cite{PP}. Since then, the theory for BDSDEs achieved fruitful results, which has
also been proved to be an efficient tool such as probabilistic interpretation of stochastic partial differential equations. We can cite the two remarkable works due to Bukdhan and Ma (cf. \cite{BM1, BM2}), which introduced the very interest notion of stochastic viscosity solution of SPDE.

In this paper, we consider the following problem: if the terminal condition $\xi$ is replaced
by a family of $\mathcal{F}_T$-measurable random variables $\xi(x)$ depending on a parameter $x\in\R$ and such that $x\mapsto \xi(x,\omega)$ a.s. are homeomorphisms on $\R$, could the corresponding solution to Eq.$(\ref{BDSDE})$ $x\mapsto Y^{\xi(x)}_t$ be homeomorphisms on $\R$? When all the things are non-random,
this problem is of course affirmative. In the case of forward stochastic differential equations, stochastic homeomorphisms flows are well known and were studied in \cite{K,P,YO,RZ}, etc. In particular, in his book \cite{P}, Protter studied the more general stochastic flows of SDEs driven
by semimartingales. The situation for BSDEs have been investigated recently by Huijie and Zhang \cite{QZ}. The proof is based on given extended comparison theorem for
BSDE, which is used to compare the solutions of the BSDE with the backward ordinary differential equation, and on Yamada-Ogura's argument \cite{YO}.
The aim of this paper is to adapted the same step to the BDSDE in order to answer the above question positively. Here, we use the new version of comparison theorem for BDSDE to compare the solutions of the above BDSDE with the backward stochastic differential equation.

We are mainly devoted to proving the following two
results.
\begin{theorem}
In addition to $({\bf H}^{1}_{f})$ and $({\bf H}^{1}_{g})$, we also assume that
\begin{description}
\item $({\bf H}^{2}_{f})$\ the random variable $\int^{T}_{0}|f (s, 0, 0)|ds$ is bounded by $C_0$;
\item $({\bf H}^{2}_{g})$\ the function $g$ is linear in $y$ and independent of $z$ i.e there exist a functions $a:[0,T]\times\Omega_2\rightarrow \R$ such that
$
\begin{array}{l}
g(s,\omega_2,y)=a(s,\omega_2)y
\end{array}
$
verifying $|a(s,\omega_2)|<C_g/2$ a.s. \, for all\; $s\in[0,T]$
\item $({\bf H}^{1}_{\xi})$\ for almost all $\omega,\ x\mapsto\xi(\omega,x)$ is increasing (or decreasing) and a homeomorphism on $\R$;
\item $({\bf H}^{2}_{\xi})$\ for any $R > 0$, there are $\delta_R,\ C_R > 0$ such that $\E|\xi(x) - \xi(y)|^2\leq C_R|x - y|^{1+\delta_R}$ for all
$|x|, |y| \leq R$;
\item $({\bf H}^{3}_{\xi})$\ for some $R_0 > 0$ and $\varepsilon > 0, \inf_{|x|\geq R_0} \xi(x,\omega)/h(x)> \varepsilon$ a.s., where $h(x)$ is a real continuous function on $\R$ satisfying $\lim_{x\rightarrow\pm\infty}h(x)=\pm\infty$ (or $\lim_{x\rightarrow\pm\infty}h(x)=\mp\infty$).
\end{description}
Then, for almost all $\omega\in\Omega$, the map $\R\ni x \rightarrow Y_t^{\xi(x)}(\omega)\in\R$ is a homeomorphism for every
$t\in[0, T ]$.
\end{theorem}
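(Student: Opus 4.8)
The overall plan is to run the Yamada--Ogura scheme: for each fixed $t\in[0,T]$ I would show that, for almost every $\omega$, the map $x\mapsto Y_t^{\xi(x)}(\omega)$ (i) admits a version jointly continuous in $(t,x)$, (ii) is strictly monotone, and (iii) is onto $\R$. A continuous, strictly monotone surjection of $\R$ onto itself is automatically a homeomorphism, so establishing (i)--(iii) proves the theorem. The structural simplification that makes everything tractable is $({\bf H}^2_g)$: since $g(s,y)=a(s)y$ with $|a|<C_g/2$, I would first remove the backward integral by the integrating factor
\[
\Gamma_t=\exp\Big(\int_t^T a(s)\,d\overleftarrow{B_s}+\tfrac12\int_t^T a(s)^2\,ds\Big),
\]
normalized so that $\Gamma_T=1$. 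Applying the backward It\^o formula to $\Gamma_t Y_t^{\xi(x)}$ and using that $\Gamma$ depends only on $B$ (hence has no cross variation with $W$), the process $\widetilde Y_t=\Gamma_t Y_t^{\xi(x)}$ solves a standard BSDE with terminal value $\xi(x)$ and driver $\widetilde f(s,y,z)=\Gamma_s f(s,\Gamma_s^{-1}y,\Gamma_s^{-1}z)$, which is again Lipschitz with constant $C_f$. Because $\Gamma_t(\omega)>0$ is, for fixed $\omega$, a positive constant, the map $x\mapsto Y_t^{\xi(x)}(\omega)=\Gamma_t(\omega)^{-1}\widetilde Y_t(\omega)$ is a homeomorphism if and only if $x\mapsto\widetilde Y_t(\omega)$ is; thus it suffices to prove (i)--(iii) for the BSDE, which lives in the enlarged filtration $\mathcal{G}_t=\mathcal{F}_t^W\vee\mathcal{F}_T^B$ with $\mathcal{F}_T^B$-measurable coefficients.

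For continuity I would use the standard a priori $L^2$-estimate for this BSDE together with $({\bf H}^2_\xi)$. Subtracting the equations for two parameters $x,y$ with $|x|,|y|\le R$ and applying the estimate gives $\E\sup_{t}|\widetilde Y_t^{\,x}-\widetilde Y_t^{\,y}|^2\le C\,\E|\xi(x)-\xi(y)|^2\le CC_R|x-y|^{1+\delta_R}$, where the boundedness of $\int_0^T|f(s,0,0)|\,ds$ from $({\bf H}^2_f)$ controls the inhomogeneous term. Kolmogorov's continuity criterion then yields a modification of $(t,x)\mapsto\widetilde Y_t^{\,x}$ that is a.s. jointly continuous, and I would fix this version from now on.

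Monotonicity follows from the comparison theorem for BDSDEs. For rationals $x>y$, $({\bf H}^1_\xi)$ gives $\xi(x)>\xi(y)$ a.s., and the strict form of the comparison yields $Y_t^{\xi(x)}>Y_t^{\xi(y)}$ a.s. for each $t$. Taking a countable family of such pairs and using the continuous modification, I would obtain, for almost every $\omega$ and every $t$, that $x\mapsto\widetilde Y_t^{\,x}(\omega)$ is nondecreasing and strictly increasing along rationals, hence strictly increasing on all of $\R$ (strictly decreasing in the decreasing case).

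The genuinely delicate step, and the one I expect to be the main obstacle, is surjectivity: I must show $Y_t^{\xi(x)}\to+\infty$ as $x\to+\infty$ and $Y_t^{\xi(x)}\to-\infty$ as $x\to-\infty$, uniformly enough in $t$ and simultaneously for a.e. $\omega$. Here $({\bf H}^3_\xi)$ supplies, for $|x|\ge R_0$, the bound $\xi(x)\ge\varepsilon h(x)$ with $h(x)\to\pm\infty$; comparing $(\ref{BDSDE})$ with a reference backward SDE whose terminal value is $\varepsilon h(x)$---and whose solution is controlled explicitly thanks to the linear structure already exploited in forming $\Gamma$---I would bound $Y_t^{\xi(x)}$ below by a quantity that diverges with $h(x)$, the contribution of the driver being kept finite by $({\bf H}^2_f)$. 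The difficulty is to make this lower bound hold for almost every $\omega$ and every $t$ at once, compatibly with the fixed continuous modification; this forces a careful, $x$-uniform choice of the reference equation and a uniform control of its solution as the terminal data grow. Once the two limits at $\pm\infty$ are secured, continuity and strict monotonicity force $x\mapsto Y_t^{\xi(x)}(\omega)$ to be a continuous strictly monotone bijection of $\R$, hence a homeomorphism, for almost every $\omega$ and every $t\in[0,T]$.
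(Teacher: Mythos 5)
Your architecture --- (i) joint continuity via an $L^2$-estimate plus Kolmogorov, (ii) strict monotonicity via the strict comparison theorem on rational pairs extended by continuity, (iii) divergence of $Y^{x}_t$ as $x\to\pm\infty$ --- is exactly the paper's scheme (its Lemma 2.2, Theorem 2.1, and the proof of Theorem 1.1, all following Qiao--Zhang \cite{QZ}). Your genuine departure is the first move: killing the backward integral with an exponential integrating factor and working with a classical BSDE in $\mathcal{G}_t=\mathcal{F}^W_t\vee\mathcal{F}^B_T$. The paper never transforms the equation; instead it proves a BDSDE comparison theorem whose proof contains the same exponential (the process $Q^t_s$), and compares with explicit linear BDSDE solutions $X^{\pm}_t(x)=\varepsilon_0 h(x)\exp\left(\pm C_f(T-t)+\int_t^T a(s)\,d\overleftarrow{B}_s-\frac{1}{2}\int_t^T|a(s)|^2ds\right)$. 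Your reduction is attractive because it would let you import BSDE machinery wholesale, but two points need repair. First, backward It\^o calculus flips the sign of the quadratic-variation corrections: the factor that cancels $\int a Y\,d\overleftarrow{B}$ is $\Gamma_t=\exp\left(-\int_t^T a(s)\,d\overleftarrow{B}_s+\frac{1}{2}\int_t^T a(s)^2ds\right)$, not the one you wrote; moreover the cross variation between $\Gamma$ and $Y$ (through $B$) is \emph{not} zero and is precisely what makes the drift corrections cancel, so your parenthetical ``no cross variation'' reasoning is off. Second, after the transformation the inhomogeneity becomes $\Gamma_s|f(s,0,0)|$, whose time integral is bounded only by the random variable $C_0\sup_s\Gamma_s$; hence hypothesis $({\bf H}^2_f)$ of \cite{QZ} does not transfer verbatim and you cannot simply cite their theorem --- you must check that an a.s.\ finite, $x$-independent bound suffices for the limits (it does, but it has to be said).

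The real gap is in your surjectivity step, and it is not where you locate it. A \emph{single} comparison between $Y^{\xi(x)}$ and a linear reference equation with terminal value $\varepsilon h(x)$ cannot be run: a general Lipschitz $f$ is neither pointwise above nor pointwise below a linear driver, so no comparison theorem applies to that pair. What the paper does (and what you are missing) is a two-layer comparison: first sandwich $Y^x$ between the solutions $\hat Y^x,\tilde Y^x$ of the BDSDEs with extremal drivers $\pm\left[|f(s,0,0)|+C_f(|y|+|z|)\right]$, which \emph{are} pointwise comparable to $f$; then compare $\hat Y^x$ (for $x<-M$) and $\tilde Y^x$ (for $x>M$) with the explicit solutions $X^{\pm}_t(x)$. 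It is only in this second layer that $({\bf H}^2_f)$ enters: the leftover $|f(s,0,0)|$ is still not pointwise dominated, and is absorbed through the extended comparison theorem (its finite-variation term and terminal gap), which is why $M$ is chosen so that $|h(x)|(\varepsilon-\varepsilon_0)\geq C_0Te^{2C_0T}$; note also that the driver inequality is only needed along $(X^{\pm},0)$, where it holds because $X^{\pm}$ has $Z\equiv 0$ and constant sign. Without this structure your ``reference backward SDE'' step has no theorem to invoke. By contrast, the $(t,\omega)$-uniformity you flag as the main obstacle is the easy part: the comparison conclusions hold a.s.\ simultaneously for all $t$, and together with the fixed bicontinuous version, continuity plus strict monotonicity plus divergence at $\pm\infty$ give the homeomorphism property for every $t$ at once.
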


The proof of this theorem is based on an extended comparison theorem for
BDSDE given in Section 2, which is used to compare the solutions of the
BDSDE with the backward stochastic differential equation

\begin{theorem}
In addition to $({\bf H}^{1}_{f})$, $({\bf H}^{2}_{g})$, $({\bf H}^{1}_{\xi})$ and $({\bf H}^{2}_{\xi})$, we assume that
\begin{description}
\item $({\bf H}^{2'}_{f})$\ for some $C_1>0$, and $\varepsilon_1>0$, it holds that\\
$
\begin{array}{l}
y.f (s, \omega,y,z)\geq -C_1|z|^{2},\ \mbox{for all}\; (s,\omega)\in[0, T ]\times\Omega\, \mbox{and}\; |y|\leq \varepsilon_1,\ z\in\R;
\end{array}
$
\\
\item $({\bf H}^{3'}_{\xi})$\ for some $\beta <\frac{1-2C1}{2}\wedge 0,\ \liminf_{|x|\rightarrow\infty}\E|\xi(x)|^{4\beta} = 0$.
\end{description}
Then, for almost all $\omega\in\Omega$, the map $\R\ni x \rightarrow Y_t^{\xi(x)}(\omega)\in\R$ is a homeomorphism for every
$t\in[0, T ]$.
\end{theorem}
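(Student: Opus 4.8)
The plan is to establish the homeomorphism property by proving three things: that $x \mapsto Y_t^{\xi(x)}$ is (a) continuous, (b) strictly monotone (hence injective), and (c) surjective onto $\R$, for almost all $\omega$ and every $t \in [0,T]$. The structure mirrors the proof of Theorem 1.1, but now the terminal condition need no longer satisfy the growth condition $({\bf H}^{3}_{\xi})$; instead the coercivity-type assumption $({\bf H}^{2'}_{f})$ on $f$ near $y=0$ and the decay condition $({\bf H}^{3'}_{\xi})$ on the terminal values will be used to force the solution away from the origin, which is what drives surjectivity.

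First I would record continuity and monotonicity, which are the more routine parts. Continuity in $x$ follows from the a priori $L^2$-estimates for BDSDEs together with $({\bf H}^{2}_{\xi})$: the Kolmogorov-type moment bound $\E|\xi(x)-\xi(y)|^2 \le C_R|x-y|^{1+\delta_R}$ propagates to $Y$ via the standard estimate $\E\sup_t|Y_t^{\xi(x)}-Y_t^{\xi(y)}|^2 \le C\,\E|\xi(x)-\xi(y)|^2$, and Kolmogorov's continuity criterion then yields an a.s. continuous version of $x\mapsto Y_t^{\xi(x)}$. Strict monotonicity is where the extended comparison theorem of Section~2 enters: since $({\bf H}^{1}_{\xi})$ makes $x\mapsto\xi(x)$ strictly monotone, comparison gives that $x<x'$ implies $Y_t^{\xi(x)} \le Y_t^{\xi(x')}$, and a strict comparison argument (or the uniqueness of solutions applied to the difference) upgrades this to strict inequality, so the map is injective.

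The hard part will be surjectivity, and this is where $({\bf H}^{2'}_{f})$ and $({\bf H}^{3'}_{\xi})$ must be combined. The idea is to show $\lim_{x\to+\infty} Y_t^{\xi(x)} = +\infty$ and $\lim_{x\to-\infty} Y_t^{\xi(x)} = -\infty$ (with signs reversed in the decreasing case), which together with continuity and monotonicity gives surjectivity via the intermediate value theorem. To do this I would compare $Y^{\xi(x)}$ with the solution of an auxiliary linear backward equation, using $({\bf H}^{2'}_{f})$ to control $f$ from below: the condition $y\cdot f(s,\omega,y,z) \ge -C_1|z|^2$ for $|y|\le\varepsilon_1$ prevents the solution, once its sign is fixed, from being dragged back toward zero faster than a controllable exponential rate. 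Technically I expect to apply It\^o's formula to $|Y_t^{\xi(x)}|^{2\beta}$ (or a comparable power dictated by the exponent $\beta$ appearing in $({\bf H}^{3'}_{\xi})$), where the threshold $\beta < \frac{1-2C_1}{2}\wedge 0$ is exactly calibrated so that the stochastic-integral and drift terms arising from the doubly stochastic structure remain integrable and the resulting estimate closes. The condition $\liminf_{|x|\to\infty}\E|\xi(x)|^{4\beta}=0$ (note $\beta<0$, so this says $|\xi(x)|$ blows up along a subsequence) then forces $|Y_t^{\xi(x)}|$ to grow without bound; combined with monotonicity this fixes the correct sign of the limit.

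The main obstacle, concretely, is controlling the backward It\^o integral $\int_t^T g(s,Y_s)\,d\overleftarrow{B_s}$ and the martingale term $\int_t^T Z_s\,dW_s$ in the energy estimate for the negative power $|Y|^{2\beta}$: unlike in the classical BSDE case treated by Huijie and Zhang, the presence of the backward integral against $B$ contributes an extra quadratic-variation term, and the assumption $({\bf H}^{2}_{g})$ that $g(s,\omega_2,y)=a(s,\omega_2)y$ with $|a|<C_g/2$ is precisely what makes this extra term absorbable into the bound, keeping the effective coercivity constant below the threshold encoded in $\beta$. I would therefore carry out the estimate carefully near $\{|Y|\le\varepsilon_1\}$, using a stopping-time or localization argument to handle the singularity of $|y|^{2\beta}$ at the origin, and then pass to the limit to conclude that the map is a bijection, hence—being continuous with continuous inverse by the open mapping property on $\R$—a homeomorphism.
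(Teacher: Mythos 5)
Your proposal follows essentially the same route as the paper's own proof: continuity and strict injectivity via the $L^2$-estimate under $({\bf H}^{2}_{\xi})$ (the paper's Lemma 2.2) together with the strict comparison theorem (Theorem 2.1), and surjectivity via a negative-power It\^o estimate on $|Y^{x}_t|^{2\beta}$ calibrated by $\beta<\frac{1-2C_1}{2}\wedge 0$, yielding $\liminf_{|x|\rightarrow\infty}\E\left(\sup_{0\leq t\leq T}|Y^{x}_t|^{4\beta}\right)=0$ (the paper's Lemma 2.3), followed by the Yamada--Ogura argument to convert this into $|Y^{x}_t|\rightarrow\infty$ and hence ontoness. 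The only cosmetic differences are technical: the paper regularizes the singularity at the origin with $(|y|^2+\varepsilon)^{\beta}$ rather than stopping times, and works with $g\equiv 0$ in that lemma rather than absorbing the linear $g$ term directly into the estimate.
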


The proof of this theorem is based on Yamada-Ogura's argument [10]. An
elementary function satisfying $({\bf H}^{1}_{f})$ and $({\bf H}^{2'}_{f})$ is $f (y,z) = y +\arctan y.(1+\sin z)$. Moreover,
it is clear that $({\bf H}^{2}_{g})$ and $({\bf H}^{3}_{\xi})$ implies respectively $({\bf H}^{1}_{g})$ and $({\bf H}^{3'}_{\xi})$. These two theorems will be proved in Section 2.

A simple financial meaning for these results is explained as follows: if one investor wants
to get sufficiently high return at a future time, then he or she must invest enough money at the
present time.

In Section 3, we apply Theorem 1.2 to the following backward doubly stochastic differential equation
coupled with a forward stochastic differential equation:
\begin{eqnarray}
\left\{
\begin{array}{l}
\displaystyle{X_{s}^{t,x}=x+\int_{t}^{s}b(X_{r}^{t,x})dr+\int_{t}^{s}\sigma(X^{t,x}_{r})dW_{r}}\\
\displaystyle{Y_{s}=h(X^{t,x}_{T})+\int_{s}^{T}f(r,X^{t,x}_{r},Y_{r}^{t,x},Z_{r}^{t,x})\,
dr+\int_{s}^{T}a(r,X_{r}^{t,x})Y_{r}^{t,x}d\overleftarrow{B_{r}}}\\
\displaystyle{-\int_{s}^{T}Z^{t,x}_{r}dW_{r}},\;\;\; s\in[t,T],\label{FBSDE}
\end{array}\right.
\end{eqnarray}
where $b, \sigma, h : \R \rightarrow\R$, $f: [0, T ]\times\R\times\R\times\R\rightarrow\R$ and $a,c:[0,T]\times\R\rightarrow\R$ are Borel measurable functions. This
type equation in general case was proved in Pardoux-Peng \cite{PP} to be related to some second order quasilinear
parabolic stochastic partial differential equations under some regularity assumptions on the above functions. Our another aim in the present paper is to obtain the homeomorphic property for $x\mapsto Y^{ t,x}_s$, and furthermore, get the homeomorphic property for the solutions to some second order parabolic partial differential equations.

Throughout the paper, $C$ with or without indices will denote different positive constants (depending on the indices) whose values are not important.
\section{Proofs of Main results}
\setcounter{theorem}{0} \setcounter{equation}{0}
Before proving our main results, let us first prove a other version for the comparison
theorem of BDSDEs which need a slight constraint on the coefficient $g$, that is $g$ no dependent of $z$ . Here the method is borrowed from El Karoui et al. \cite{Kal} .
\begin{theorem}
\begin{description}
\item $(i)$\ $f^2$ satisfies $({\bf H}_{f^1}^{1})$ with Lipschitz constant $C_{f^1}$, and $f^{1}(s,\omega, y, z)\geq f^{2}(s,\omega, y, z)$ for all
$(s, \omega)\in[0, T ]\times\Omega$ and $y, z, \in\R$;
\item $(ii)$,\ $V^{1}_t$ and $V^{2}_t$ are $\mathcal{F}_t$-adapted and finite variation processes with $d(V^{1}-V^{2})^{+}_t\leq d\beta_t$ for some
determined and increasing function $\beta_t$, where $d(V^1-V^2)^{+}_t$ denotes the positive variational;
\item $(iii)$\ $\xi^1,\ \xi^2, V^1_T, V^2_T \in L^2(\Omega,\mathcal{F}_T, \P),\; (\xi^1 - \xi^2 + V^1_T - V^2_T)> \varepsilon$ a.s. for some $\varepsilon > 0$.
Let $Y^1_t$ and $Y^2_t$ be the solutions to the following BDSDEs:
\begin{eqnarray*}
Y^1_t = \xi^{1} + V^{1}_t +\int_{t}^{T}f^{1}(s, Y^{1}_s, Z^{1}_s)ds +\int_{t}^{T}g(s, Y^{1}_s)d\overleftarrow{B}_s-\int_t^T
Z^{1}_sdW_s ,\  t\in[0, T],\\
Y^1_t = \xi^{2} + V^{2}_t +\int_{t}^{T}f^{2}(s, Y^{2}_s, Z^{2}_s)ds +\int_{t}^{T}g(s, Y^{2}_s)d\overleftarrow{B}_s-\int_t^T
Z^{2}_sdW_s ,\  t\in[0, T].
\end{eqnarray*}
Then we have for any $t\in [0, T ]$
\begin{eqnarray*}
Y^1_t - Y^2_t > e^{-(C_{f^{1}}+C_g) T}\varepsilon- e^{(C_{f^{1}}+C_g)T }, \:\:\mbox{a.s}.
\end{eqnarray*}
In particular, if $V^{1}_t = V^{2}_t$ and $\xi^1 > \xi^2$ a.s., then for all $t\in[0, T]$
\begin{eqnarray*}
Y^1_t > Y^2_t,\:\:\:\mbox{a.s}.
\end{eqnarray*}
\end{description}
\end{theorem}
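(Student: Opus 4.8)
The plan is to follow the linearisation-and-adjoint scheme of El Karoui et al., adapted to handle the backward stochastic integral proper to the doubly stochastic setting. Writing $\hat Y_t=Y^1_t-Y^2_t$ and $\hat Z_t=Z^1_t-Z^2_t$, I first subtract the two equations and decompose the driver difference as
\begin{eqnarray*}
f^1(s,Y^1_s,Z^1_s)-f^2(s,Y^2_s,Z^2_s)=\Delta_s+a_s\hat Y_s+b_s\hat Z_s,
\end{eqnarray*}
where $\Delta_s=f^1(s,Y^1_s,Z^1_s)-f^2(s,Y^1_s,Z^1_s)\ge 0$ by hypothesis $(i)$, while the Lipschitz property of $f^2$ furnishes adapted coefficients with $|a_s|,|b_s|\le C_{f^1}$. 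Since $g$ does not depend on $z$, I likewise write $g(s,Y^1_s)-g(s,Y^2_s)=c_s\hat Y_s$ with $|c_s|$ bounded by the Lipschitz constant of $g$. This turns the equation for $\hat Y$ into a linear BDSDE with bounded coefficients, a nonnegative source $\Delta_s$, and terminal value $\hat Y_T=(\xi^1-\xi^2)+(V^1_T-V^2_T)>\varepsilon$.

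Next I introduce the strictly positive weight $\Gamma_s$, $t\le s\le T$, solving the linear doubly stochastic SDE
\begin{eqnarray*}
d\Gamma_s=\Gamma_s\big(\alpha_s\,ds+b_s\,dW_s+c_s\,d\overleftarrow{B}_s\big),\qquad \Gamma_t=1,
\end{eqnarray*}
where the finite-variation drift $\alpha_s$ is fixed so as to absorb the backward-Itô covariation, and I apply the Itô product formula for BDSDEs to $\Gamma_s\hat Y_s$. The decisive point, made possible precisely because $g$ is free of $z$ and enters linearly, is that the two $d\overleftarrow{B}$ contributions in $d(\Gamma_s\hat Y_s)$ arising from $\Gamma_s\,d\hat Y_s$ and $\hat Y_s\,d\Gamma_s$ cancel, the $W$-covariation cancels the $b_s\hat Z_s$ term, and choosing $\alpha_s$ to kill the residual $\Gamma_s c_s^2\hat Y_s\,ds$ produced by the backward covariation leaves as the only drift $-\Gamma_s\Delta_s\,ds$ together with the weighted variation $\Gamma_s\,d(V^1_s-V^2_s)$, the sole martingale part being a genuine $W$-integral. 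Hence $\Gamma_s\hat Y_s$ plus its accumulated source is a martingale for $\mathcal G_t=\mathcal F^W_t\vee\mathcal F^B_T$, and conditioning on $\mathcal G_t$, under which the forward integral has zero mean, yields
\begin{eqnarray*}
\hat Y_t=\E\Big[\Gamma_T\,\hat Y_T+\int_t^T\Gamma_s\Delta_s\,ds-\int_t^T\Gamma_s\,d(V^1_s-V^2_s)\,\Big|\,\mathcal G_t\Big].
\end{eqnarray*}

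I then bound the three contributions from below. Since $\Gamma_s>0$ and $\Delta_s\ge 0$, the source integral is nonnegative; the terminal term dominates $\varepsilon\,\E[\Gamma_T\mid\mathcal G_t]$ by $(iii)$; and the variation integral is bounded above by $\int_t^T\Gamma_s\,d\beta_s$ using $d(V^1-V^2)^+_t\le d\beta_t$ with $\beta$ deterministic and increasing, which produces the corresponding correction term. Bounding the strictly positive weight from below then supplies the exponential factor $e^{-(C_{f^1}+C_g)T}$ and hence the stated estimate. In the special case $V^1=V^2$ one takes $\beta\equiv 0$, so the variation term vanishes and the representation reduces to $\hat Y_t=\E[\Gamma_T(\xi^1-\xi^2)+\int_t^T\Gamma_s\Delta_s\,ds\mid\mathcal G_t]$; as $\Gamma_T>0$ and $\xi^1>\xi^2$ a.s., this conditional expectation is almost surely strictly positive, whence $Y^1_t>Y^2_t$.

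The step I expect to be the main obstacle is the Itô product formula in the doubly stochastic setting: one must verify with the correct backward-Itô sign conventions that the $d\overleftarrow{B}$ martingale terms in $d(\Gamma_s\hat Y_s)$ genuinely cancel and that the residual backward covariation is exactly matched by the drift $\alpha_s$, so that $\Gamma_s\hat Y_s$ collapses to a $W$-martingale and the conditioning on $\mathcal G_t$ becomes legitimate. Once this cancellation is secured, the handling of the variation term via $\beta$, the lower bound on $\Gamma$, and the change of measure for the $W$-part are routine.
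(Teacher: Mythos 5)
Your proposal is correct and takes essentially the same route as the paper: the paper linearizes the driver and coefficient differences (with $\hat f_s\ge 0$ as source) and then invokes the explicit solution of the resulting linear BDSDE with exponential weight $Q^t_s$, which is exactly your adjoint process $\Gamma_s$, before conditioning and bounding the terminal, source and variation terms just as you do. The only cosmetic differences are that you derive the representation via the It\^o product rule rather than citing the linear formula, you linearize $f^2$ around $(Y^1,Z^1)$ where the paper linearizes $f^1$ around $(Y^2,Z^2)$, and you condition on $\mathcal{G}_t=\mathcal{F}^W_t\vee\mathcal{F}^B_T$ instead of $\mathcal{F}_t$.
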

\begin{proof}
Put
\begin{eqnarray*}
\hat{\xi} &=& \xi^{1}-\xi^{2},\;\;\;\; \hat{Y}_t = Y_t^{1}-Y_t^{2},\;\;\;\;\; \hat{V}_t = V^{1}_t-V^{2}_t,\;\;\;\;\;\; \hat{Z}_t = Z^{1}_t-Z^{2}_t\\
\hat{f}_t &=&f^1(t, Y^ 2_t , Z^2_t )-f^{2}(t, Y^{2}_t, Z^{2}_t),
\end{eqnarray*}
and
\begin{eqnarray*}
a_t &=& [f^1(t, Y^1_t,Z^1_t) - f^1(t, Y^2_t,Z^1_t)]/(Y^1_t-Y^2_t){\bf 1}_{\{Y^1_t\neq Y^2_t\}},\\
b_t &=& [f^1(t, Y^2_t,Z^1_t)-f^1(t,Y^2_t, Z^2_t)]/(Z^1_t-Z^2_t){\bf 1}_{\{Z^1_t\neq Z^2_t\}},\\
c_t &=& [g(t, Y^1_t) - g(t, Y^2_t)]/(Y^1_t-Y^2_t){\bf 1}_{\{Y^1_t\neq Y^2_t\}}.
\end{eqnarray*}
Then for fixed $r\in[0, t]$
\begin{eqnarray*}
\hat{Y}_t &=& \hat{\xi}+ \hat{V}_t +\int_{t}^{T}[a_s\hat{Y}_s+b_s\hat{Z}_s+\hat{f}_s]ds+\int_{t}^{T}c_s\hat{Y}_sdB_s-\int_t^T
\hat{Z}_sdW_s\\
&=&\hat{Y}_r+ \hat{V}_t-\hat{V}_r -\int_{r}^{t}[a_s\hat{Y}_s+b_s\hat{Z}_s+\hat{f}_s]ds-\int_{r}^{t}c_s\hat{Y}_sdB_s+\int_r^t
\hat{Z}_sdW_s,
\end{eqnarray*}
is a linear BDSDE. It is well not that this equation has an explicit solution given by:
\begin{eqnarray}
\hat{Y}_t=Q^t_T\hat{Y}_T-\int_{t}^{T}Q^t_sd\hat{V}_s+\int_{t}^{T}Q^t_s\hat{f}_sds-\int_{t}^{T}Q^t_s(b_s\hat{Y}_s+b_s\hat{Z}_s+\hat{Z}_s)dW_s,\label{linear}
\end{eqnarray}
where for fixed $t\in[0,T]$ we define
\begin{eqnarray*}
Q^t_s=\exp\left(\int_{t}^{s}b_rdW_r-\frac{1}{2}\int_{t}^{s}|b_r|^{2}dr+\int_{t}^{s}c_rdB_r-\frac{1}{2}\int_{t}^{s}|c_r|^{2}dr+\int_{t}^{s}a_sds\right),
\\ \, t\leq s\leq T.
\end{eqnarray*}
Noting that
\begin{eqnarray*}
|a_t |\leq C_{f^1},\;\;\; |b_t|\leq C_{f^1},\;\;\; |c_t|\leq C_{g},
\end{eqnarray*}
$Q^t_s$ is well defined. Moreover it is clear that for any $0\leq t \leq s\leq T$, we have
\begin{eqnarray*}
e^{-(C_{f^{1}}+C_g)(s-t)}\leq \E[Q^t_s|\mathcal{F}_t]\leq e^{(C_{f^{1}}+C_g)(s-t)}
\end{eqnarray*}
Taking expectation with respect to $\mathcal{F}_t$ in $(\ref{linear})$, we obtain
\begin{eqnarray*}
\hat{Y}_t=\E[Q^t_T\hat{Y}_T|\mathcal{F}_t]-\int_{t}^{T}\E[Q^t_s|\mathcal{F}_t]d\hat{V}_s+\int_{t}^{T}\E[Q^t_s\hat{f}_s|\mathcal{F}_t]ds.
\end{eqnarray*}
Therefore, by $(i)$, $(ii)$ and $(iii)$ we have
\begin{eqnarray*}
\hat{Y}_t&=&\E[Q^t_T\hat{Y}_T|\mathcal{F}_t]-\int_{t}^{T}\E[Q^t_s|\mathcal{F}_t]d\hat{V}_s+\int_{t}^{T}\E[Q^t_s\hat{f}_s|\mathcal{F}_t]ds\\
&>&\varepsilon\E[Q^t_T\hat{Y}_T|\mathcal{F}_t]-\int_{t}^{T}\E[Q^t_s|\mathcal{F}_t]d\beta_s\\
&\geq&e^{-(C_{f^{1}}+C_g+\alpha_g)T}\varepsilon -e^{(C_{f^{1}}+C_g+\alpha_g)T}\beta_T, \;\;\; a.s.
\end{eqnarray*}
The proof is thus complete.
\end{proof}

For simplicity of the notation, we write $Y^{x}_t=Y^{\xi(x)}_t$ in the sequel of the paper. Let us prove a useful Lemma.
\begin{lemma}
Assume $({\bf H}^{1}_{f})$, $({\bf H}^{2}_{g})$ and $({\bf H}^{2}_{\xi})$ hold. Then for any $R>0$ we have
\begin{eqnarray*}
\E\left[\sup_{0\leq t\leq T}|Y^{x}_t-Y^{y}_t|^{2}\right]\leq C_{R}|x-y|^{1+\delta_{R}},\;\;\;\; |x|,\ |y|\leq R.
\end{eqnarray*}
In particular; $\{Y^{x}_t: (t,x)\in[0,T]\times\R$ admits a bicontinuous modification. If in addition $({\bf H}^{1}_{\xi})$ holds, then
\begin{eqnarray*}
\P(\omega:\, Y^{x}_t(\omega)<Y^{y}_t(\omega),\, \forall\  x<y, \,\; t\in[0,T])=1.
\end{eqnarray*}
\end{lemma}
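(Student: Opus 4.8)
The plan is to establish the three assertions in order: the quantitative $L^2$-estimate, the existence of a bicontinuous modification via Kolmogorov's criterion, and finally the simultaneous strict ordering.

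First I would prove the moment estimate. Set $\hat Y_t = Y^{x}_t - Y^{y}_t$, $\hat Z_t = Z^{x}_t - Z^{y}_t$ and $\hat\xi = \xi(x)-\xi(y)$, so that $\hat Y$ solves the BDSDE whose generator is the difference of the two $f$-terms and whose backward-diffusion coefficient is $a(s)\hat Y_s$ by $({\bf H}^{2}_{g})$. Applying the Itô formula for BDSDEs to $|\hat Y_t|^2$, the backward stochastic integral contributes the extra energy term $\int_t^T |a(s)|^2|\hat Y_s|^2\,ds$ with a positive sign, which is dominated by $\tfrac{C_g^2}{4}\int_t^T|\hat Y_s|^2\,ds$ thanks to $|a(s)|<C_g/2$. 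Using the Lipschitz bound $|f(s,Y^{x}_s,Z^{x}_s)-f(s,Y^{y}_s,Z^{y}_s)|\le C_f(|\hat Y_s|+|\hat Z_s|)$ together with Young's inequality, I would absorb a small multiple of $\int_t^T|\hat Z_s|^2\,ds$ into the left-hand side, take expectations so that the $dW$- and $d\overleftarrow B$-martingale terms vanish, and invoke Gronwall to get $\E|\hat Y_t|^2 + \E\int_0^T|\hat Z_s|^2\,ds \le C\,\E|\hat\xi|^2$. A Burkholder--Davis--Gundy estimate for the two stochastic integrals, with the resulting supremum reabsorbed on the left, upgrades this to $\E\sup_{0\le t\le T}|\hat Y_t|^2\le C\,\E|\hat\xi|^2$; the stated bound then follows from $({\bf H}^{2}_{\xi})$, since $\E|\hat\xi|^2\le C_R|x-y|^{1+\delta_R}$.

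For the second assertion I would regard $x\mapsto Y^{x}_{\cdot}$ as a process indexed by $x\in\R$ taking values in $C([0,T])$ with the uniform norm. The estimate just proved reads $\E\,\|Y^{x}_\cdot-Y^{y}_\cdot\|_\infty^2\le C_R|x-y|^{1+\delta_R}$ on $\{|x|,|y|\le R\}$, and since the exponent $1+\delta_R$ exceeds $1$, Kolmogorov's continuity criterion yields a modification of $(t,x)\mapsto Y^{x}_t$ that is Hölder in $x$ uniformly in $t$, hence jointly continuous, on each ball and therefore on all of $[0,T]\times\R$. I work with this bicontinuous version henceforth. For the ordering, I would first fix rational $x<y$: by $({\bf H}^{1}_{\xi})$ the map $x\mapsto\xi(x)$ is a.s.\ a strictly increasing homeomorphism, so $\xi(x)<\xi(y)$ a.s., and applying the comparison Theorem~2.1 with $f^1=f^2=f$ and $V^1=V^2=0$ gives $Y^{x}_t<Y^{y}_t$ a.s.\ for each fixed rational $t$. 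Intersecting over the countably many rational triples produces one full-measure event on which $Y^{x}_t<Y^{y}_t$ for all rational $x<y$ and all rational $t$; bicontinuity then extends this to the non-strict bound $Y^{x}_t\le Y^{y}_t$ for all real $x\le y$ and all $t\in[0,T]$. To reach the strict statement for arbitrary reals $x<y$ I would interpolate by rationals $x<p<q<y$, so that it suffices to make $Y^{p}_t<Y^{q}_t$ strict simultaneously for every $t$.

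The main obstacle is exactly this last point: the comparison theorem delivers strictness only for each fixed $t$, through a null set that may depend on $t$, whereas the claim demands strict positivity of $\hat Y_t=Y^{q}_t-Y^{p}_t$ at all $t\in[0,T]$ at once. To overcome it I would exploit the explicit representation of the linear BDSDE for the difference obtained in the proof of Theorem~2.1, namely $\hat Y_t=\E[\,Q^t_T\hat\xi\mid\mathcal F_t\,]$ with $Q^t_T>0$ and $\hat\xi=\xi(q)-\xi(p)>0$ a.s., which forces $\hat Y_t>0$ a.s.\ for each fixed $t$. Since the bicontinuous version makes $\hat Y$ continuous and everywhere nonnegative, a Fubini argument shows the random zero set $\{t:\hat Y_t=0\}$ has zero Lebesgue measure a.s.; combining this with the strict positivity of the multiplicative exponential factor $Q^t_T$, I would argue at the path level that $\hat Y$ cannot touch zero on $[0,T]$, yielding $\hat Y_t>0$ for every $t$ and hence the required strict ordering $Y^{x}_t<Y^{y}_t$ for all $x<y$ and all $t$, on a single event of full probability. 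Making this final path-level exclusion of zeros fully rigorous is the delicate step I expect to require the most care.
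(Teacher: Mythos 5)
Your first two steps are sound and, for the quantitative estimate, essentially identical to the paper's own proof: It\^o's formula applied to $|\hat Y_t|^2$, the Lipschitz bound on $f$ plus Young's inequality to absorb a small multiple of $\int_t^T|\hat Z_s|^2ds$, Gronwall, and then a Burkholder--Davis--Gundy argument for the supremum, with $({\bf H}^{2}_{\xi})$ supplying the final bound. In fact the paper's proof stops exactly there; the bicontinuous modification (your Kolmogorov step, which is correct and standard) and the strict ordering are asserted without argument, so on those two points you are going beyond what the paper writes down.

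The gap is precisely where you flagged it, and it is genuine: the path-level exclusion of zeros does not follow from the two facts you invoke. For arbitrary $p<q$, knowing that the continuous nonnegative process $\hat Y_t=Y^{q}_t-Y^{p}_t$ satisfies $\hat Y_t>0$ a.s.\ for each \emph{fixed} $t$, and (by Fubini) that its zero set $\{t:\hat Y_t=0\}$ is Lebesgue-null a.s., is perfectly compatible with $\hat Y$ having isolated zeros -- a continuous nonnegative path can touch zero on a null set -- and the strict positivity of $Q^t_T$ gives nothing at the path level, since $Q^t_T$ only enters inside a conditional expectation evaluated at the fixed time $t$. The standard device that closes this for ordinary BSDEs is to multiply by the adjoint exponential so that $Q^0_t\hat Y_t=\E[Q^0_T\hat\xi\mid\mathcal{F}_t]$ becomes a continuous martingale with a.s.\ strictly positive terminal value, and then to apply optional sampling at the first hitting time of zero to conclude the martingale never vanishes. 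In the doubly stochastic setting of this paper that argument does not transplant verbatim: $\{\mathcal{F}_t\}$ is not a filtration, and $Q^0_t$ is \emph{not} $\mathcal{F}_t$-measurable (it contains increments of $B$ on $[0,t]$, which lie outside $\mathcal{F}^{B}_{t,T}$), so it cannot be pulled out of the conditional expectation and there is no martingale to stop; one would have to rework the argument under the genuine filtration $\mathcal{G}_t=\mathcal{F}^W_t\vee\mathcal{F}^B_T$ or rerun the comparison on a stochastic interval $[\tau,T]$. So you have correctly isolated a real difficulty -- one the paper itself silently skips -- but your proposal as written does not resolve it, and the "Fubini plus positivity of $Q^t_T$" sketch would fail as a proof.
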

\begin{proof}
Set $\bar{Y}_t=Y^{x}_t-Y^{y}_t$ and $\bar{Z}_t=Z^{x}_t-Z^{y}_t$. By Itô's formula, we have
\begin{eqnarray*}
|\bar{Y}_t|^{2}+\int^{T}_{t}|\bar{Z}_s|^{2}ds&=&|\bar{Y}_T|^2+2\int_{t}^{T}\bar{Y}_s[f(s,Y^{x}_s,Z^{x}_s)-f(s,Y^{y}_s,Z^{y}_s)]ds\\
&&+2\int_{t}^{T}|g(s,Y^{x}_s,Z^{x}_s)-g(s,Y^{y}_s,Z^{y}_s)|^2ds\\
&&+2\int_{t}^{T}\bar{Y}_s[g(s,Y^{x}_s,Z^{x}_s)-g(s,Y^{y}_s,Z^{y}_s)]dB_s
-2\int_{t}^{T}\bar{Y}_s\bar{Z}_sdW_s.
\end{eqnarray*}
Taking expectation and using $({\bf H}^{1}_{f})$, $({\bf H}^{1}_{g})$   and Young's inequality, we deduce that
\begin{eqnarray*}
\E|\bar{Y}_t|^{2}+(1-\alpha-\gamma)\E\int^{T}_{t}|\bar{Z}_s|^{2}ds&\leq &\E|\bar{Y}_T|^2+C\E\int_{t}^{T}|\bar{Y}_s|^{2}ds,
\end{eqnarray*}
for $\gamma$ taken small enough such that $1-\alpha-\gamma>0$. It then follows from Gronwall's inequality and $({\bf H}^{1}_{\xi})$ that for any $t\in[0,T]$
\begin{eqnarray}
\E|\bar{Y}_t|^{2}+(1-\alpha-\gamma)\E\int^{T}_{t}|\bar{Z}_s|^{2}ds&\leq &C\E|\bar{Y}_T|^2\leq C|x-y|^{1+\delta_{R}}.\label{esti2}
\end{eqnarray}
Hence, by Burkölder's inequality we have
\begin{eqnarray*}
\E\left[\sup_{0\leq t\leq T}|\bar{Y}_t|^{2}\right]&\leq& \E|\bar{Y}_T|^2+C\int_{0}^{T}\E|f(s,Y^{x}_s,Z^{x}_s)-f(s,Y^{y}_s,Z^{y}_s)|^{2}ds\\
&&+2\int_{0}^{T}\E|g(s,Y^{x}_s,Z^{x}_s)-g(s,Y^{y}_s,Z^{y}_s)|^2ds\\
&&+C\E\left[\sup_{0\leq t\leq T}\left|\int_{t}^{T}[g(s,Y^{x}_s,Z^{x}_s)-g(s,Y^{y}_s,Z^{y}_s)]dB_s\right|^{2}\right]\\
&&+C\E\left[\sup_{0\leq t\leq T}\left|\int_{t}^{T}\bar{Z}_sdW_s\right|^{2}\right]\\
&\leq&C\E\left\{|\bar{Y}_T|^2+\int_{0}^{T}|\bar{Y}_s|^{2}ds+\int_{0}^{T}|\bar{Z}_s|^{2}ds\right\}\\
&\leq& C|x-y|^{1+\delta_{R}}.
\end{eqnarray*}
The proof is finished.
\end{proof}

We now give the proof of Theorem 1.1.

{\bf Proof of Theorem 1.1.} We assume that here $g$ is linear on $y$ and and independent of $z$ i.e  $({\bf H}^{2}_{g})$ holds. By $({\bf H}^{1}_{f})$, $({\bf H}^{1}_{g})$, Theorem 2.1 and Lemma 2.2, we know that $x\mapsto Y^{x}_t(\omega)$ are continuous injective for all $t\in[0,T]$, a.s. Next we prove the onto property of $x\mapsto Y^{x}_t(\omega)$. Let $(\hat{Y}^{x}_t,\hat{Z}^{x}_t)$ and $(\tilde{Y}^{x}_t,\tilde{Z}^{x}_t)$ be respectively the solutions to equations:
\begin{eqnarray*}
\hat{Y}^{x}_t&=&\xi(x)+\int^T_t\left[|f(s,0,0)|+C_{f}(|\hat{Y}^{x}_s|+|\hat{Z}^{x}_s|)\right]ds\\
&&+\int^{T}_{t}a(s)\hat{Y}^x_sd\overleftarrow{B}_s-\int^T_t\hat{Z}^{x}_sdW_s,
\end{eqnarray*}
and
\begin{eqnarray*}
\tilde{Y}^{x}_t&=&\xi(x)-\int^T_t\left[|f(s,0,0)|+C_{f}(|\tilde{Y}^{x}_s|+|\tilde{Z}^{x}_s|)\right]ds\\
&&+\int^{T}_{t}a(s)\tilde{Y}^x_sd\overleftarrow{B}_s-\int^T_t\tilde{Z}^{x}_sdW_s,
\end{eqnarray*}
where $C_{f}$ is the Lipschitz constant of $f$.

Once again appying the comparison theorem of BDSDE, we obtain
\begin{eqnarray}
Y^{x}_t\leq \hat{Y}^{x}_t, \;\;\; \forall\ x\in\R, \, \forall\ t\in[0,T],\;\; \mbox{a.s.},\label{Comparison1}\\\nonumber\\
\tilde{Y}^{x}_t\leq Y^{x}_t, \;\;\; \forall\ x\in\R, \, \forall\ t\in[0,T],\;\; \mbox{a.s.}\label{Comparison2}
\end{eqnarray}
For $0<\varepsilon_0<\varepsilon$, choosing $M>R_0$ sufficiently large such that
\begin{eqnarray*}
|h(x)|\geq \frac{C_0 T\ e^{2C_0 T}}{\varepsilon-\varepsilon_0},\;\;\; \mbox{for all}\, |x|>M,
\end{eqnarray*}
where $C_{0}$ is the constant in $({\bf H}^{2}_{f})$.

Then by $({\bf H}^{3}_{\xi})$ we have \\\\
$
\begin{array}{l}
\xi(x)+C_{0}Te^{2C_0 T}\leq h(x)\varepsilon_0,\:\:\: \forall\ x<-M\,\,\, \mbox{a.s.}\\\\
\xi(x)-C_{0}Te^{2C_0 T}\geq h(x)\varepsilon_0,\:\:\: \forall\ x>M\;\; \mbox{a.s.}
\end{array}
$
\\\\
Set $
\begin{array}{l}
X^{\pm}_t(x)= h(x)\varepsilon_0\exp\left(\pm C_{f}(T-t)+\int^{T}_{t}a(s)d\overleftarrow{B}_s-\frac{1}{2}\int^{T}_{t}|a(s)|^{2}ds\right).
\end{array}
$
Then
\begin{eqnarray}
X^{\pm}_t(x)=h(x)\varepsilon_0\pm C_{f}\int^{T}_{t}X^{\pm}_s(x)ds+\int^{T}_{t}a(s)X^{\pm}_s(x)d\overleftarrow{B}_s.\label{SDE}
\end{eqnarray}
By $({\bf H}^{2}_{f})$ and Theorem 2.1 we have
\begin{eqnarray}
\hat{Y}^{x}_t\leq X^{+}_t(x)= h(x)\varepsilon_0.\exp\left(C_{f}(T-t)+\int^{T}_{t}a(s)d\overleftarrow{B}_s-\frac{1}{2}\int^{T}_{t}|a(s)|^{2}ds\right),\nonumber\\ \forall\ x<-M\,\,\, \mbox{a.s.}\label{C1}\\\nonumber\\
h(x)\varepsilon_0.\exp\left(-C_{f}(T-t)+\int^{T}_{t}a(s)d\overleftarrow{B}_s-\frac{1}{2}\int^{T}_{t}|a(s)|^{2}ds\right)=X^{-}_t(x)\leq \tilde{Y}^{x}_t,\nonumber\\\;\; \forall\ x>M\,\,\, \mbox{a.s.}\label{C2}
\end{eqnarray}
Thus, we finally get from $(\ref{Comparison1})$ to $(\ref{C2})$ and $({\bf H}^{3}_{\xi})$
\begin{eqnarray*}
\lim_{\uparrow \infty}Y^{x}_{t}=+\infty,\;\;\;\;\;\;\;\lim_{\downarrow -\infty}Y^{x}_{t}=-\infty,\;\;\; \forall\ t\in[0,T]\;\;\; \mbox{a.s.},
\end{eqnarray*}
which complete the proof of surjection of the mapping $x\mapsto Y^{x}_t(\omega)$.
$\blacksquare$

The following lemma plays a crucial role for proving Theorem 1.2.
\begin{lemma}
Assume $({\bf H}^{1}_{f})$, $({\bf H}^{1}_{g})$, $({\bf H}^{2'}_{f})$ and $({\bf H}^{3'}_{\xi})$. Moreover, we suppose that $g(s,\omega,y,z)=0$ a.s. Then
\begin{eqnarray*}
\liminf_{|x|\rightarrow +\infty}\E\left(\sup_{0\leq t\leq T}|Y^{x}_t|^{4\beta}\right)=0,
\end{eqnarray*}
where $\beta$ is given in $({\bf H}^{3'}_{\xi})$.
\end{lemma}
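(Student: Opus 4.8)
The plan is to use that, with $g\equiv 0$, equation (\ref{BDSDE}) collapses to the ordinary BSDE $Y^x_t=\xi(x)+\int_t^Tf(s,Y^x_s,Z^x_s)\,ds-\int_t^TZ^x_s\,dW_s$, and then to run a Yamada--Ogura test-function argument with the singular weight $\phi(y)=|y|^{4\beta}$. Since $\beta<0$, $\phi$ is large near the origin and small at infinity, so an estimate of the form $\E[\sup_t|Y^x_t|^{4\beta}]\le C\,\E|\xi(x)|^{4\beta}$ with $C$ \emph{independent of} $x$ would, together with $({\bf H}^{3'}_{\xi})$, give the claim immediately by passing to $\liminf_{|x|\to\infty}$ (the left side being nonnegative).

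First I would apply It\^o's formula to $\phi(Y^x_t)$. With $\phi'(y)=4\beta y|y|^{4\beta-2}$ and $\phi''(y)=4\beta(4\beta-1)|y|^{4\beta-2}$, the backward dynamics produce the drift $G_s:=\phi'(Y_s)f(s,Y_s,Z_s)-\tfrac12\phi''(Y_s)|Z_s|^2$ together with a stochastic integral. The heart of the matter is a sign analysis of $G_s$. On $\{|Y_s|\le\varepsilon_1\}$ I would insert $({\bf H}^{2'}_{f})$, that is $Y_sf\ge-C_1|Z_s|^2$; because the prefactor $4\beta|Y_s|^{4\beta-2}$ in $\phi'(Y_s)$ is negative, the inequality reverses and, combined with the second-order term, leaves the coefficient $2\beta(1-2C_1-4\beta)$ in front of $|Y_s|^{4\beta-2}|Z_s|^2$. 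Exactly the hypothesis $\beta<\frac{1-2C_1}{2}\wedge 0$ forces this coefficient to be nonpositive, so $G_s\le 0$ on $\{|Y_s|\le\varepsilon_1\}$. I would also record that $-\tfrac12\phi''(Y_s)|Z_s|^2\le 0$ for every $Y_s\neq 0$, so this ``good'' negative term is available on all of $\R$.

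Next I would treat $\{|Y_s|>\varepsilon_1\}$, where $\phi,\phi'$ are bounded and $f$ is Lipschitz by $({\bf H}^{1}_{f})$. Using $|f|\le|f(s,0,0)|+C_f(|Y_s|+|Z_s|)$ and the Young inequality $|Y_s|^{4\beta-1}|Z_s|\le\tfrac{\lambda}{2}|Y_s|^{4\beta-2}|Z_s|^2+\tfrac1{2\lambda}\phi(Y_s)$, I would absorb the singular $|Z_s|^2$ contribution into the good second-order term by choosing $\lambda$ small relative to $|2\beta(4\beta-1)|$, obtaining $G_s\le C(1+|f(s,0,0)|)\phi(Y_s)$ on this set. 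Combining the two regions gives $G_s\le C(1+|f(s,0,0)|)\phi(Y_s)$ globally, and transferring the good term to the left-hand side of the It\^o identity yields in addition the a priori bound $\E\int_t^T|Y_s|^{4\beta-2}|Z_s|^2\,ds\le C\big(\E\phi(\xi(x))+\E\int_t^T(1+|f(s,0,0)|)\phi(Y_s)\,ds\big)$.

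Finally I would take the supremum in $t$, control the martingale term by Burkholder--Davis--Gundy (observing $|\phi'(Y_s)|^2|Z_s|^2=16\beta^2\phi(Y_s)|Y_s|^{4\beta-2}|Z_s|^2$, so a further Young step absorbs $\tfrac12\E[\sup_t\phi(Y_t)]$ back to the left using the bound just obtained), and close with a Gronwall argument to reach $\E[\sup_t|Y^x_t|^{4\beta}]\le C\,\E|\xi(x)|^{4\beta}$ with $C$ uniform in $x$; then $({\bf H}^{3'}_{\xi})$ finishes the proof. I expect two main obstacles. The conceptual one is the sign bookkeeping near $0$: the admissible range of $\beta$ is precisely what makes $G_s\le 0$ on $\{|Y_s|\le\varepsilon_1\}$, and any slack there destroys the estimate. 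The technical one is that $\phi(y)=|y|^{4\beta}$ is singular and non-smooth at the origin, so It\^o's formula must be justified through the regularization $(\delta+|y|^2)^{2\beta}$ with $\delta\downarrow 0$ and a localizing sequence of stopping times making the stochastic integral a genuine martingale; a further minor care concerns the random weight $1+|f(s,0,0)|$ in the Gronwall step, handled via its integrability from $({\bf H}^{1}_{f})$.
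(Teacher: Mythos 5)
Your overall strategy is the same as the paper's: It\^o's formula applied to a regularized negative power of $|Y^x_t|$, a sign analysis in which $({\bf H}^{2'}_{f})$ is invoked on $\{|Y_s|\le\varepsilon_1\}$ and the condition $\beta<\frac{1-2C_1}{2}\wedge 0$ makes the coefficient of $|Y_s|^{4\beta-2}|Z_s|^2$ nonpositive, an a priori Gronwall bound producing $\E\int_t^T|Y^x_s|^{4\beta-2}|Z^x_s|^2\,ds\le C\,\E|\xi(x)|^{4\beta}$, a maximal-inequality step for the supremum, and the conclusion from $({\bf H}^{3'}_{\xi})$ along a sequence $|x_n|\to\infty$.

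There is, however, one step in your write-up that fails: the treatment of $f(s,0,0)$. You keep the inhomogeneous bound $|f|\le|f(s,0,0)|+C_f(|y|+|z|)$ and propose to close the estimate by a Gronwall argument with the random weight $1+|f(s,0,0)|$, ``handled via its integrability from $({\bf H}^{1}_{f})$''. This does not work. After taking expectations your inequality reads
\begin{eqnarray*}
\E\,\phi(Y_t)\le\E\,\phi(\xi(x))+C\,\E\int_t^T\left(1+|f(s,0,0)|\right)\phi(Y_s)\,ds,
\end{eqnarray*}
and since the weight is random it cannot be pulled out of the expectation; Gronwall's lemma applies to $t\mapsto\E\,\phi(Y_t)$ only if the weight is deterministic or uniformly bounded (boundedness is $({\bf H}^{2}_{f})$, which is \emph{not} assumed in this lemma). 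An exponential-weight (stochastic Gronwall) variant would require a bound on $\E\left[\exp\left(c\int_0^T|f(s,0,0)|\,ds\right)|\xi(x)|^{4\beta}\right]$, which the mere $L^2$ integrability in $({\bf H}^{1}_{f})$ does not dominate by $C\,\E|\xi(x)|^{4\beta}$ with $C$ uniform in $x$ --- and uniformity in $x$ is exactly what the final $\liminf$ argument needs. Alternatively, bounding $|Y_s|^{4\beta-1}|f(s,0,0)|\le\varepsilon_1^{4\beta-1}|f(s,0,0)|$ on $\{|Y_s|>\varepsilon_1\}$ produces an additive constant independent of $x$ that survives the $\liminf$ and destroys the conclusion. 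The paper avoids all of this with a one-line observation you are missing: taking $z=0$ in $({\bf H}^{2'}_{f})$ gives $y\,f(s,y,0)\ge 0$ for $|y|\le\varepsilon_1$, so by the Lipschitz continuity in $({\bf H}^{1}_{f})$ one gets $f(s,0,0)=0$, hence $|f(s,y,z)|\le C_f(|y|+|z|)$ with no inhomogeneous term; the Gronwall step then closes with a deterministic constant and yields $\E\left[\sup_{0\le t\le T}|Y^x_t|^{4\beta}\right]\le C\,\E|\xi(x)|^{4\beta}$ uniformly in $x$. With this correction inserted, the remainder of your argument is sound and coincides with the paper's proof.
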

\begin{remark}
We observe that if $({\bf H}^{2}_{g})$ holds, hence $g(s,\omega,y,z)=0$ and $({\bf H}^{1}_{g})$ is verified.
\end{remark}
\begin{proof}
In the following proof, by drawing the sequence if necessary, without any loss of
generality we may assume that for all $x\in\R$\:
$
\begin{array}{c}
\E|\xi(x)|^{4\beta}<\infty.
\end{array}
$

For any $\varepsilon > 0$, by Itô's formula we have
\begin{eqnarray}
(|Y^{x}_t|^2+\varepsilon)^{\beta}&=&(|\xi(x)|^2+\varepsilon)^{\beta}+2\beta\int_{t}^T(|Y^{x}_s|^2+\varepsilon)^{\beta-1}Y^{x}_sf(s,Y^{x}_s,Z^{x}_s)ds\nonumber\\
&&-2\beta\int_{t}^{T}(|Y^{x}_{s}|^{2}+\varepsilon)^{\beta-1}Y^{x}_{s}Z^{x}_{s}dW_s\nonumber\\
&&+2\beta\int_{t}^{T}(|Y^{x}_{s}|^{2}+\varepsilon)^{\beta-1}Y^{x}_{s}g(s,Y^{x}_s,Z^{x}_{s})d\overleftarrow{B}_s\nonumber\\
&&-2\beta(\beta-1)\int_{t}^{T}(|Y^{x}_{s}|^{2}+\varepsilon)^{\beta-2}|Y^{x}_{s}|^{2}|Z^{x}_{s}|^{2}ds\nonumber\\
&&-\beta\int_{t}^{T}(|Y^{x}_{s}|^{2}+\varepsilon)^{\beta-1}|Z^{x}_{s}|^{2}ds\nonumber\\
&&+2\beta(\beta-1)\int_{t}^{T}(|Y^{x}_{s}|^{2}+\varepsilon)^{\beta-2}|Y^{x}_{s}|^{2}|g(s,Y^{x}_s,Z^{x}_{s})|^{2}ds\nonumber\\
&&+\beta\int_{t}^{T}(|Y^{x}_{s}|^{2}+\varepsilon)^{\beta-1}|g(s,Y^{x}_s,Z^{x}_{s})|^{2}ds.\nonumber\\\label{esti1}
\end{eqnarray}
Let us first prove the a priori estimate
\begin{eqnarray}
\sup_{0\leq t\leq T}\E(|Y^{x}_t|^{2})^{2\beta}.\label{esti3}
\end{eqnarray}
It is clear that $({\bf H}^{1}_{f})$ and $({\bf H}^{2'}_{f})$ implies $f(s,0,0)=0$, which together with $({\bf H}^{1}_{f})$ then gives
\begin{eqnarray*}
|f(s,y,z)|\leq C_{f}(|y|+|z|).
\end{eqnarray*}
Since $\beta<\frac{1-2C_1}{2}\wedge 0$, we can choose $\delta>0$ such that
$
\begin{array}{l}
[\beta(C_1+\delta C_{f})+8\beta^2-2\beta-\alpha_{g}(16\beta^2-4\beta)]>0.
\end{array}
$
Thus replacing $\beta$ by $2\beta$ in the above estimates, taking expectation and letting $\varepsilon\downarrow 0$, we have by $ab\leq \delta +b^{2}/(4\delta)$ and monotonic convergence theorem:
\begin{eqnarray*}
&&\E|Y^{x}_t|^{4\beta}+\left[\beta(C_1+\delta C_{f})+2\beta(4\beta-1)-2\alpha_{g}\beta(4\beta-1)\right]\int_{t}^T\E(|Y^{x}_s|^{2(2\beta-1)}|Z^{x}_s|^{2})ds\\
&&\leq\E|\xi(x)|^{4\beta}+\left[4|\beta|C_{f}\left(1+\frac{1}{4\delta}\right)+2C_g\beta(4\beta-1)\right]\int_{t}^T\E|Y^{x}_s|^{4\beta}ds.
\end{eqnarray*}
Hence, Gronwall's inequality gives for any $t\in[0, T]$
\begin{eqnarray*}
&&\E|Y^{x}_t|^{4\beta}+\int_{t}^T\E(|Y^{x}_s|^{2(2\beta-1)}|Z^{x}_s|^{2})ds\leq C\E|\xi(x)|^{4\beta}.
\end{eqnarray*}
Thus $(\ref{esti3})$ follows by $(\ref{esti2})$ .

Similar to the above calculations, from $(\ref{esti1})$ we may derive that
\begin{eqnarray*}
|Y^{x}_t|^{2\beta}&\leq& |\xi(x)|^{2\beta}+C\int_{t}^{T}|Y^{x}_s|^{2\beta}ds-2\beta\int_{t}^{T}|Y^{x}_{s}|^{2(\beta-1)}Y^{x}_{s}Z^{x}_{s}dW_s\nonumber\\
&&+2\beta\int_{t}^{T}|Y^{x}_{s}|^{2(\beta-1)}Y^{x}_{s}g(s,Y^{x}_s,Z^{x}_{s})d\overleftarrow{B}_s.
\end{eqnarray*}
Therefore, by Doob's maximal inequality
\begin{eqnarray*}
\E\left(\sup_{0\leq t\leq T}|Y^{x}_t|^{4\beta}\right)&\leq& \E|\xi(x)|^{4\beta}+C\int_{t}^{T}|Y^{x}_s|^{4\beta}ds+C\E\left[\sup_{0\leq t\leq }\left|\int_{t}^{T}|Y^{x}_{s}|^{2(\beta-1)}Y^{x}_{s}Z^{x}_{s}dW_s\right|^2\right]\nonumber\\
&&+C\E\left[\sup_{0\leq t\leq }\left|\int_{t}^{T}|Y^{x}_{s}|^{2(\beta-1)}Y^{x}_{s}g(s,Y^{x}_s,Z^{x}_{s})d\overleftarrow{B}_s\right|^2\right]\\
&\leq& \E|\xi(x)|^{4\beta}+C\int_{t}^{T}\E|Y^{x}_s|^{4\beta}ds+C\int_{t}^T\E(|Y^{x}_s|^{2(2\beta-1)}|Z^{x}_s|^{2})ds\\
&\leq& \E|\xi(x)|^{4\beta},
\end{eqnarray*}
which yields the result by $({\bf H}^{3'}_{\xi})$.
\end{proof}

{\bf Proof of Theorem 1.2}
By $({\bf H}^{3'}_{\xi})$, Theorem 2.1 and Lemma 2.2, the mappings $x\mapsto Y^x_t(\omega)$ are continuous injective for all $t\in[0, T ]$, a.s. With the help of Lemma 2.3, the proof of surjection of $x\mapsto Y^{x}_t (\omega)$ is just a repeat of (\cite{QZ}, p.13) and we therefore omit the details.
\section{Applications}
\setcounter{theorem}{0}
\setcounter{equation}{0}

In this section we consider Eq.$(\ref{FBSDE})$ and work on the framework of Pardoux-Peng \cite{PP}, assuming that
\begin{description}
\item $({\bf C}^1_{f})$ for every $s\in[0, T ],\ (x, y, z) \mapsto f(s, x, y, z)$ is of class $C^3$, the first order partial
derivatives in $y$ and $z$ are bounded on $[0, T ]\times\R\times\R\times\R$, as well as their derivatives of
order one and two with respect to $x, y, z$;
\item $({\bf C}^2_{f})$ for every $s\in[0, T ]$, the function $x\mapsto f (s, x, 0, 0)$ has polynomial growth at infinity
together with all partial derivatives up to order three;
\item $({\bf C}^3_{f})$ for every $s\in[0, T ]$ and $y, z\in\R$, the function $\mapsto f(s, x, y, z)$ is increasing (or
decreasing) in $x$;
\item $({\bf C}^4_{f})$ for some $C_1 > 0$ and $\varepsilon_1 > 0$, it holds that $y.f (s, x, y, z) > -C_1|z|^2$ for all $s\in[0, T ]$ and $|y|\leq \varepsilon_1,\; x, z\in\R$;
\item $({\bf C}^1_{g})$ $g(s,x,y)=a(s,x)y$ such that, for every $s\in[0,T], (x, y) \mapsto g(s, x, y)$ is of class $C^{3}$, the first order partial derivatives in $y$ is bounded.
\item $({\bf C}^1_{\sigma,b})$ $\sigma, b\in C^{3}_b(\R)$ have all bounded derivatives up to order three;
\item $({\bf C}^2_{\sigma,b})$ there are constants $c_1 > 0$ such that
\begin{eqnarray*}
|b(x)| + |\sigma(x)|\leq c_1|x|;
\end{eqnarray*}
\item $({\bf C}^1_{h})$ $h$ is of class $C^3$ with polygonal growth derivatives up to order three;
\item $({\bf C}^2_{h})$ $x\mapsto h(x)$ is increasing (or decreasing) and a homeomorphism on $\R$;
\item $({\bf C}^3_{h})$ there are constants $c_2, \gamma> 0$ such that
\begin{eqnarray*}
|h(x)| \geq c_2|x|^{\gamma}.
\end{eqnarray*}
\end{description}
Consider the following quasilinear parabolic stochastic partial differential equations:
\begin{eqnarray}
\left\{
\begin{array}{l}
\displaystyle{\frac{\partial u}{\partial t}(t,x)+\mathcal{L}u(t,x)+f(s,x,u(t,x),(\partial_x u.\sigma)(t,x))+a(s,x)u(t,x)\lozenge B_s}\\\\
u(T,x)=h(x)
\end{array}\right.
\label{SPDE}
\end{eqnarray}
where $u : [0, T ]\times\R\rightarrow\R,\; \mathcal{L} =\frac{1}{2}\sigma^2(x)\frac{\partial^2}{\partial x^{2}}+ b(x) \frac{\partial x}{\partial x}$ and $\lozenge$ denotes the Wick product, which indicates that the differential is to understand in Itô's sense.
Pardoux-Peng \cite{PP} proved the following result:
\begin{theorem}
Under the assumptions $({\bf C}^1_{f})$, $({\bf C}^2_{f})$, $({\bf C}^1_{g})$, $({\bf C}^1_{\sigma,b})$ and $({\bf C}^1_{h})$, for any $t\in[0, T ]$, let
$\{(Y^{t,x}_s , Z^{t,x}_s ),\ s\in[t, T ]\}$ be the solution to Eq. $(\ref{FBSDE})$, and define
\begin{eqnarray*}
u(t, x) = Y^{t,x}_t,
\end{eqnarray*}
then $u\in C^{1,2}([0, T ] \times\R)$ is the unique solution to Eq. $(\ref{SPDE})$.
\end{theorem}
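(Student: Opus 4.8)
The plan is to follow the nonlinear Feynman--Kac program adapted to the doubly stochastic setting, in the spirit of Pardoux--Peng \cite{PP}. The proof naturally splits into three parts: establishing a Markovian representation $Y^{t,x}_s = u(s, X^{t,x}_s)$, proving the $C^{1,2}$-regularity of $u$, and finally verifying that $u$ solves the SPDE $(\ref{SPDE})$ together with uniqueness.

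First I would establish that the decoupled forward--backward system $(\ref{FBSDE})$ admits a unique solution $(Y^{t,x}_s, Z^{t,x}_s)$ under $({\bf C}^1_{f})$, $({\bf C}^1_{g})$, $({\bf C}^1_{\sigma,b})$ and $({\bf C}^1_{h})$: existence and uniqueness for the BDSDE follow from the Pardoux--Peng theory once the forward flow $X^{t,x}$ is controlled. Using the flow property $X^{t,x}_s = X^{r, X^{t,x}_r}_s$ for $t \le r \le s$ together with the uniqueness of the BDSDE solution, I would deduce the Markov property $Y^{t,x}_s = u(s, X^{t,x}_s)$ a.s., where $u(t,x) = Y^{t,x}_t$. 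At this stage $u$ is known only to be measurable with suitable integrability.

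The central and most delicate step is the regularity of $u$. Under the $C^3_b$-type hypotheses on $\sigma, b, f, g, h$ I would invoke Kunita's theory of stochastic flows to obtain that $x \mapsto X^{t,x}_s$ is three times continuously differentiable in the mean-square sense, with the first variation process $\nabla X^{t,x}_s$ solving a linear SDE. Differentiating the BDSDE formally in $x$ produces linear variational BDSDEs for $\nabla Y^{t,x}_s$ and $\nabla Z^{t,x}_s$; a priori $L^2$-estimates combined with Kolmogorov's continuity criterion applied to the increments in $x$ then justify the differentiation and yield that $(t,x) \mapsto Y^{t,x}_t$ is $C^{1,2}$. A key identity to extract here is the representation $Z^{t,x}_s = (\partial_x u \cdot \sigma)(s, X^{t,x}_s)$, obtained by combining the Markov property with the link between $Z$ and the trace of the Malliavin derivative of $Y$; this is precisely what makes the $z$-argument of $f$ in $(\ref{SPDE})$ equal to $(\partial_x u \cdot \sigma)$.

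Finally, to identify the equation I would apply a generalized It\^o--Kunita--Ventzell formula to $u(s, X^{t,x}_s)$, accounting simultaneously for the forward It\^o integral in $W$ and the backward integral in $B$. Matching the resulting decomposition term by term with $(\ref{FBSDE})$, the $ds$-terms reproduce the generator $\mathcal{L}u + f$, while the $d\overleftarrow{B}_s$-term yields the Wick contribution $a(s,x)u \lozenge B_s$, showing that $u$ solves $(\ref{SPDE})$. For uniqueness, given any $C^{1,2}$ solution $v$, the same It\^o--Kunita--Ventzell computation shows that $(v(s,X^{t,x}_s), (\partial_x v \cdot \sigma)(s, X^{t,x}_s))$ solves $(\ref{FBSDE})$, so uniqueness of the BDSDE forces $v = u$. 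I expect the main obstacle to be the regularity step: rigorously passing from the formal variational equations to genuine differentiability of $u$, and correctly extending the usual It\^o--Wentzell calculus to the anticipating backward integral $d\overleftarrow{B}$ so that the Wick term emerges cleanly.
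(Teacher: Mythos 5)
The paper does not actually prove this theorem: it is quoted as a known result of Pardoux--Peng \cite{PP} (``Pardoux-Peng \cite{PP} proved the following result''), so there is no internal proof to compare against. Your outline is essentially the argument of that cited reference --- Markovian representation $Y^{t,x}_s=u(s,X^{t,x}_s)$ via the flow property, differentiability in $x$ through variational BDSDEs and moment estimates, the identification $Z^{t,x}_s=(\partial_x u\cdot\sigma)(s,X^{t,x}_s)$, and verification plus uniqueness via a generalized It\^o--Wentzell formula accommodating the backward integral $d\overleftarrow{B}$ --- so your plan is the correct and standard route to this result.
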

We need the following lemma which is proved in \cite{QZ}.
\begin{lemma}
Assume that $({\bf C}^1_{\sigma,b})$ and $({\bf C}^2_{\sigma,b})$ hold. Then, for any $\beta<$ 0 there is a constant $C> 0$
such that
\begin{eqnarray*}
\E|X^{t,x}_{s}|^{2\beta}\leq C|x|^{2\beta},\;  t\in[0, T ], s\in[t, T ], |x| > 1.
\end{eqnarray*}
\end{lemma}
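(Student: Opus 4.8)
The quantity to be bounded is a \emph{negative} moment of the forward diffusion (recall $\beta<0$, hence $2\beta<0$), which is large precisely when $X^{t,x}_s$ is near the origin. The plan is to apply It\^o's formula to a regularized power of $|X^{t,x}_s|^2$ and to use the linear growth hypothesis $({\bf C}^2_{\sigma,b})$ to show that every term produced by It\^o's formula is controlled by the function itself, leading to a Gronwall inequality. Fix $\varepsilon>0$ and set $\phi_{\varepsilon}(u)=(u^2+\varepsilon)^{\beta}$, so that
\begin{eqnarray*}
\phi_{\varepsilon}'(u)=2\beta u(u^2+\varepsilon)^{\beta-1},\qquad \phi_{\varepsilon}''(u)=2\beta(u^2+\varepsilon)^{\beta-1}+4\beta(\beta-1)u^2(u^2+\varepsilon)^{\beta-2}.
\end{eqnarray*}
Applying It\^o's formula to $\phi_{\varepsilon}(X^{t,x}_r)$ along the forward equation in $(\ref{FBSDE})$ produces a finite-variation part coming from $\phi'_{\varepsilon}b+\tfrac12\phi''_{\varepsilon}\sigma^2$ together with a stochastic integral $\int\phi'_{\varepsilon}(X^{t,x}_r)\sigma(X^{t,x}_r)dW_r$.

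First I would observe that for \emph{fixed} $\varepsilon>0$ the integrand of the stochastic term is bounded: using $\sigma(u)^2\le c_1^2u^2$ from $({\bf C}^2_{\sigma,b})$ and $u^2\le u^2+\varepsilon$ one gets $|\phi'_{\varepsilon}(u)\sigma(u)|\le 2|\beta|c_1(u^2+\varepsilon)^{\beta}\le 2|\beta|c_1\varepsilon^{\beta}$, since $\beta<0$. Hence the stochastic integral is a genuine martingale and vanishes under expectation (a localizing sequence $\tau_n=\inf\{r\ge t:|X^{t,x}_r|\ge n\}$ may be inserted to make this fully rigorous). Next I would bound the drift. With $|b(u)|\le c_1|u|$ and $\sigma(u)^2\le c_1^2u^2$, together with the elementary inequalities $u^2(u^2+\varepsilon)^{\beta-1}\le(u^2+\varepsilon)^{\beta}$ and $u^4(u^2+\varepsilon)^{\beta-2}\le(u^2+\varepsilon)^{\beta}$, each of the three drift contributions is dominated by a constant multiple of $\phi_{\varepsilon}(X^{t,x}_r)$. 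This produces a constant $C=C(\beta,c_1)$, \emph{independent of} $\varepsilon$, with
\begin{eqnarray*}
\E\phi_{\varepsilon}(X^{t,x}_s)\le\phi_{\varepsilon}(x)+C\int_t^s\E\phi_{\varepsilon}(X^{t,x}_r)\,dr.
\end{eqnarray*}

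Gronwall's inequality then yields $\E\phi_{\varepsilon}(X^{t,x}_s)\le\phi_{\varepsilon}(x)e^{C(s-t)}\le\phi_{\varepsilon}(x)e^{CT}$. To conclude I would let $\varepsilon\downarrow 0$: since $\beta<0$, the map $\varepsilon\mapsto(u^2+\varepsilon)^{\beta}$ increases as $\varepsilon$ decreases to its limit $|u|^{2\beta}$, so by the monotone convergence theorem $\E\phi_{\varepsilon}(X^{t,x}_s)\uparrow\E|X^{t,x}_s|^{2\beta}$, while $\phi_{\varepsilon}(x)=(x^2+\varepsilon)^{\beta}\le(x^2)^{\beta}=|x|^{2\beta}$ for the same reason. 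Thus $\E|X^{t,x}_s|^{2\beta}\le e^{CT}|x|^{2\beta}$, which is the claim with the constant $e^{CT}$ (the restriction $|x|>1$ is not needed for this upper bound but makes the negative-power statement the relevant one).

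The crux of the argument is the homogeneity observation in the drift estimate: the linear growth condition $({\bf C}^2_{\sigma,b})$ is exactly what forces every term arising from It\^o's formula to carry the same power $(u^2+\varepsilon)^{\beta}$ as $\phi_{\varepsilon}$ itself, with no surviving term of more negative order, so that a closed Gronwall inequality is obtained. Were $b$ or $\sigma$ allowed only a weaker behaviour, this cancellation would fail. The single delicate point is the interchange of expectation with the It\^o expansion near the origin, which is handled cleanly by the regularization $\varepsilon>0$ (and, if desired, a standard localization in $n$).
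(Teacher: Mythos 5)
Your proof is correct. Note, however, that the paper contains no proof of this lemma at all: it is stated with the remark that it ``is proved in \cite{QZ}'' and the argument is entirely deferred to that reference, so there is no in-paper proof to compare yours against. What you have produced is a correct, self-contained reconstruction of the standard estimate (and, in all likelihood, essentially the argument of the cited paper of Qiao and Zhang). The three points your argument turns on are exactly the right ones. First, for fixed $\varepsilon>0$ the diffusion integrand $\phi_{\varepsilon}'(X)\sigma(X)$, with your regularization $\phi_{\varepsilon}(u)=(u^2+\varepsilon)^{\beta}$, is bounded by $2|\beta|c_1\varepsilon^{\beta}$, because $({\bf C}^2_{\sigma,b})$ lets the factor $|u\,\sigma(u)|\leq c_1u^2$ be absorbed into $(u^2+\varepsilon)^{\beta-1}$; hence the stochastic integral is a genuine $L^2$-martingale and the localization you mention is not even needed. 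Second, the same absorption applied to $\phi_{\varepsilon}'b+\tfrac12\phi_{\varepsilon}''\sigma^2$ yields a bound $C(\beta,c_1)\,\phi_{\varepsilon}$ with $C$ \emph{independent of} $\varepsilon$; this $\varepsilon$-uniformity is the crux, since any $\varepsilon$-dependence in $C$ would be fatal when passing to the limit. Third, monotone convergence then upgrades the Gronwall bound $\E\phi_{\varepsilon}(X^{t,x}_s)\leq e^{CT}\phi_{\varepsilon}(x)$ to $\E|X^{t,x}_s|^{2\beta}\leq e^{CT}|x|^{2\beta}$. Your observation that the restriction $|x|>1$ is superfluous is also accurate: your bound holds for every $x\neq 0$ (and vacuously for $x=0$, where the right-hand side is infinite); the restriction in the statement merely reflects how the lemma is invoked in Section 3, where only large $|x|$ matters.
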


Next, applying the well known comparison theorem about the forward stochastic differential equation
(see \cite{QZ}), and the above lemma as well as Theorems 2.1, 1.2 and 3.1, we can prove that
\begin{proposition}
Under the beginning assumptions of this section, for any $t\in[0, T ]$, the
mappings $x\mapsto Y^{t,x}_s(\omega)$ are homeomorphisms on $\R$ for all $s\in[t, T ]$ a.s. In particular, the
unique solution to Eq. $(\ref{SPDE})\;  x\mapsto u(t, x)$ is a homeomorphism on $\R$.
\end{proposition}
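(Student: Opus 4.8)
The plan is to reduce Proposition 3.1 to the abstract homeomorphism result Theorem 1.2 for the parametrised BDSDE, by verifying that the terminal data $\xi(x):=h(X^{t,x}_T)$ and the effective drivers meet the hypotheses of that theorem, and then to transport the homeomorphism of $x\mapsto Y^{t,x}_s$ to the SPDE through the Pardoux--Peng identification $u(t,x)=Y^{t,x}_t$ of Theorem 3.1. Throughout I fix the initial time $t$ and regard the forward diffusion $X^{t,x}_\cdot$ as the mechanism feeding the parameter $x$ into both the terminal condition and the coefficients of the backward equation.

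First I would establish the structural properties of the forward flow. Under $({\bf C}^1_{\sigma,b})$ the classical theory of stochastic flows gives that, for almost every $\omega$, the map $x\mapsto X^{t,x}_s$ is a monotone homeomorphism of $\R$ for every $s$, and the comparison theorem for forward SDEs yields $X^{t,x}_s\le X^{t,y}_s$ for $x<y$. Composing with the increasing homeomorphism $h$ of $({\bf C}^2_h)$ shows that $x\mapsto\xi(x)$ is a.s. an increasing homeomorphism, i.e. $({\bf H}^1_\xi)$. The standard $L^2$ flow estimates combined with the local Lipschitz and polynomial-growth bounds on $h$ from $({\bf C}^1_h)$ give $\E|\xi(x)-\xi(y)|^2\le C_R|x-y|^2$ on $\{|x|,|y|\le R\}$, which is $({\bf H}^2_\xi)$ with $\delta_R=1$. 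Finally, the lower growth bound $|h(z)|\ge c_2|z|^{\gamma}$ of $({\bf C}^3_h)$ together with Lemma 3.2 gives, since $\beta<0$, the chain $\E|\xi(x)|^{4\beta}\le c_2^{4\beta}\,\E|X^{t,x}_T|^{4\beta\gamma}\le C|x|^{4\beta\gamma}\to 0$ as $|x|\to\infty$, which secures $({\bf H}^{3'}_\xi)$. The hypotheses on the drivers, namely $({\bf H}^1_f)$, $({\bf H}^{2'}_f)$ and $({\bf H}^2_g)$, follow from $({\bf C}^1_f)$, $({\bf C}^4_f)$ and $({\bf C}^1_g)$, reading $f(r,X^{t,x}_r,\cdot,\cdot)$ and $a(r,X^{t,x}_r)$ as the effective coefficients.

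With the hypotheses in hand, injectivity and continuity of $x\mapsto Y^{t,x}_s$ come from Theorem 2.1 and the Lemma 2.2 estimate: strict monotonicity uses that $f$ is increasing in $x$ by $({\bf C}^3_f)$ and that the forward flow is strictly monotone, while joint continuity in $(s,x)$ comes from the bicontinuous modification produced by $({\bf H}^2_\xi)$. Surjectivity then follows the growth argument behind Lemma 2.3 and Theorem 1.2, yielding $\lim_{x\to+\infty}Y^{t,x}_s=+\infty$ and $\lim_{x\to-\infty}Y^{t,x}_s=-\infty$. Having shown that $x\mapsto Y^{t,x}_s$ is a homeomorphism for every $s\in[t,T]$ a.s., the identification $u(t,x)=Y^{t,x}_t$ from Theorem 3.1 immediately delivers that $x\mapsto u(t,x)$ is a homeomorphism.

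The main obstacle I anticipate is that, unlike the abstract setting of Theorem 1.2 where only the terminal value carries the parameter, here the coefficients $f(r,X^{t,x}_r,\cdot,\cdot)$ and $g(r,y)=a(r,X^{t,x}_r)y$ also depend on $x$ through the forward flow. Consequently the BDSDE comparison theorem cannot be invoked verbatim when comparing $Y^{t,x}$ and $Y^{t,y}$: the difference of the linear doubly-stochastic terms produces an extra inhomogeneity $[a(r,X^{t,y}_r)-a(r,X^{t,x}_r)]Y^{t,x}_r$ of indefinite sign, and moreover $a(r,X^{t,x}_r)$ is adapted to the full filtration rather than to $B$ alone as $({\bf H}^2_g)$ literally requires. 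The delicate point is therefore to neutralise this term, e.g. by the exponential change of variable used in the proof of Theorem 1.1 that absorbs the linear $g$-term into a multiplicative $B$-exponential, before applying the comparison and growth estimates, and to check that the transformed coefficients still satisfy the adaptedness and boundedness requirements. Once this reduction is carried out carefully, the remaining steps are routine given the results of Sections 2 and 3.
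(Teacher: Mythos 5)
Your proposal is correct and follows essentially the same route as the paper, whose entire ``proof'' of Proposition 3.1 is the one-sentence remark that it follows from the forward-SDE comparison theorem of Qiao--Zhang, Lemma 3.2, and Theorems 2.1, 1.2 and 3.1 --- precisely the reduction you flesh out (monotone flow homeomorphism composed with $({\bf C}^2_h)$ for $({\bf H}^1_\xi)$, flow moment estimates plus polynomial growth of the derivatives of $h$ for $({\bf H}^2_\xi)$, the lower bound $({\bf C}^3_h)$ together with Lemma 3.2 and $\beta<0$ for $({\bf H}^{3'}_\xi)$, then the Section 2 machinery for injectivity/surjectivity and the identification $u(t,x)=Y^{t,x}_t$ of Theorem 3.1). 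The obstacle you flag in your final paragraph is genuine and is exactly the point the paper passes over in silence: the coefficient $a(r,X^{t,x}_r)$ of the $d\overleftarrow{B}$-integral depends on $x$ and on $W$, so the two BDSDEs being compared have \emph{different} $g$'s, violating both the hypothesis of Theorem 2.1 (one and the same $g$, depending only on $y$) and the measurability requirement of $({\bf H}^2_g)$ (that $a$ depend only on $\omega_2$); the $x$-dependence of $f$ is absorbed by $({\bf C}^3_f)$ and forward comparison, but no analogous monotonicity mechanism exists for the $d\overleftarrow{B}$-coefficient, and your exponential-transform remedy is itself delicate since its factor becomes $x$-dependent --- so on this point your attempt is more candid than, and no less complete than, the published argument.
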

\begin{remark}
Originally we intended to treat the problem in the present paper in general case i.e $g$ nonlinear, in the hope of obtaining the homeomorphic property.But we have revised our ambition to fuck. Indeed, one knows that if $g$ is not linear, the backward SDE $(\ref{SDE})$ has not an explicit solution, which does not provide proof of surjective.
\end{remark}

\end{document}